\newcommand{\Bcal}{\mathcal{B}}
\newcommand{\Dcal}{\mathcal{D}}
\newcommand{\Kcal}{\mathcal{K}}
\newcommand{\Mcal}{\mathcal{M}}
\newcommand{\Rcal}{\mathcal{R}}
\newcommand{\Wcal}{\mathcal{W}}
\newcommand{\Xcal}{\mathcal{X}}
\newcommand{\Ycal}{\mathcal{Y}}
\newcommand{\Zcal}{\mathcal{Z}}
\newcommand{\Z}{\mathbb{Z}}
\newcommand{\Qb}{\mathbf{Q}}
\newcommand{\Wb}{\mathbf{W}}
\newcommand{\Xb}{\mathbf{X}}
\newcommand{\Yb}{\mathbf{Y}}
\newcommand{\Zb}{\mathbf{Z}}
\newcommand{\XmuG}{(X,\mathcal{X},\mu,G)\ }
\newcommand{\YnuG}{(Y,\mathcal{Y},\nu,G)\ }
\newcommand{\ZzetaG}{(Z,\mathcal{Z},\zeta,G)\ }
\newcommand{\al}{\alpha}
\newcommand{\del}{\delta}
\newcommand{\Del}{\Delta}
\newcommand{\la}{\lambda}
\newcommand{\tet}{\theta}
\newcommand{\Om}{\Omega}
\newcommand{\ka}{\kappa}
\newcommand{\ol}{\overline}
\newcommand{\br}{\vspace{3 mm}}
\newcommand{\imp}{\Rightarrow}
\newcommand{\Id}{{\rm{Id}}}
\newcommand{\supp}{{\rm{supp\,}}}
\newcommand{\Homeo}{{\rm Homeo}}
\theoremstyle{plain}
\newtheorem{thm}{Theorem}[section]
\newtheorem{cor}[thm]{Corollary}
\newtheorem{lem}[thm]{Lemma}
\theoremstyle{definition}
\newtheorem{defn}[thm]{Definition}
\newtheorem{rmk}[thm]{Remark}
\newtheorem{rem}[thm]{Remark}
\newtheorem{exa}[thm]{Example}
\newtheorem{question}[thm]{Question}
\begin{document}


\title[On intermediate factors of a product of disjoint systems]
{On intermediate factors of a product of disjoint systems}

\author{Eli Glasner and Benjamin Weiss}

\address{Department of Mathematics\\
     Tel Aviv University\\
         Tel Aviv\\
         Israel}
\email{glasner@math.tau.ac.il}

\address {Institute of Mathematics\\
 Hebrew University of Jerusalem\\
Jerusalem\\
 Israel}
\email{weiss@math.huji.ac.il}


\setcounter{secnumdepth}{2}



\setcounter{section}{0}


\subjclass[2010]{Primary 37A15, 37A35, 37B05}
%

\begin{abstract}
We consider an intermediate factor situation in two categories:
probability measure preserving ergodic theory and compact topological dynamics.
In the first we prove a master-key  theorem and examine a wide range of applications.
In the second we treat the case when one of the systems is distal and then provide some counterexamples.
\end{abstract}

\keywords{Intermediate factors, quasi-factors, disjointness}


\begin{date}
{June 16, 2025}
\end{date}

\maketitle


\setcounter{secnumdepth}{2}


\setcounter{section}{0}


\section{Introduction}
Let us begin by reminding the reader what disjointness of dynamical systems is and how it arose in
Furstenberg's seminal paper  \cite{Fur3}.
We quote from the introduction to the book \cite{Gl-03}:

\begin{quote}
In the ring of integers $\Z$ two integers $m$ and $n$ have no
common factor if whenever $k|m$ and $k|n$ then $k=\pm 1$.
They are disjoint if  $m\cdot n$ is the least common multiple of
$m$ and $n$.
Of course in $\Z$ these two notions coincide. In his seminal paper
of 1967 \cite{Fur3},
H. Furstenberg introduced the same notions in the
context of dynamical systems, both measure-preserving
transformations and homeomorphisms
of compact spaces, and asked whether in these categories as well
the two are equivalent.
The notion of a factor in, say the measure category, is the natural
one: for an acting group $G$ the dynamical system
$\Yb=(Y,\Ycal,\nu,G)$ is a factor of the dynamical system
$\Xb=(X,\Xcal,\mu,G)$ if there exists a measurable map
$\pi:X \to Y$ with $\pi(\mu)=\nu$ that intertwines the
actions of $G$ on the phase spaces $X$ and $Y$.
A common factor of two systems $\Xb$ and $\Yb$ is thus
a third system $\Zb$ which is a factor of both.
A joining of the two systems $\Xb$ and $\Yb$ is
any system $\Wb$ which admits both as factors and is in turn
spanned by them. According to Furstenberg's definition
the systems $\Xb$ and $\Yb$ are disjoint if the product
system $\Xb\times \Yb$ is the only joining they admit.

\end{quote}
In the ring of integers, we have the property that when $x$ and $y$ are disjoint then
every factor $q$ of $yx$ that admits $y$ as a factor has the form $q = yz$ with $z$ a factor of $x$.
Our goal in this paper is to examine the validity of this property in the class of dynamical systems.

Let $G$ be a topological group.
We examine the following type of questions:

\begin{quote}
Let $\Xb = \XmuG$ and $\Yb=\YnuG$ be two ergodic probability measure preserving dynamical systems.
Suppose also that $\Yb$ is disjoint from every ergodic quasi-factor of $\Xb$.
Let $\Qb = (Q, \eta, G)$ be an intermediate factor
\begin{equation*}
\Yb \times \Xb  \to \Qb \to \Yb.
\end{equation*}
We then ask, when is it the case that
$\Qb \cong \Yb \times \Zb$ with $\Zb = \ZzetaG$ a factor of $\Xb$ \footnote{
A related problem is the following one that was raised by Jean-Paul Thouvenot.
Suppose that $T$ and $S$ are ergodic measure preserving transformations such that
$T \times T$ is isomorphic to $S \times S$ does it follow that $T$ itself is isomorphic to $S$?
 For some results on this problem see \cite{K-12}, and \cite{Ry-18}.}?
\end{quote}

We prove a master-key theorem (Theorem \ref{main} below)  and examine a wide range of applications.
We also consider a similar intermediate factor situations in the category of topological dynamics.
Here we treat the case when one of the systems is distal and then provide some counterexamples.

\br

The notions of a quasi-factor and a joining quasi-factor of an ergodic dynamical system were introduced in
\cite{Gl-83} and  \cite[Definitions  6.1 and  8.19, respectively]{Gl-03}. It is
treated in details in \cite{Gl-03} in the measure preserving category, and in \cite{Gl} in the topological dynamics category.
 It serves us here as a main tool. We will briefly recall the basic definitions.
 
 For a dynamical system $\Xb=(X,\Xcal,\mu,G)$,
a factor system $\Yb=(Y,\Ycal,\nu,G)$ with a factor map
$\pi: \Xb\to \Yb$, can be viewed as the $G$-invariant subalgebra
$\pi^{-1}(\Ycal)\subset \Xcal$.
One can also retrieve the factor $\Yb$
as a measure-preserving transformation on the space $M(X)$ of
probability measures on $X$ as follows.
Disintegrate the measure $\mu$ along the fibers of
$\pi^{-1}(\Ycal)$,
\begin{equation}\label{eq-bary}
\mu=\int_Y\mu_yd\nu(y)
\end{equation}
and observe that the $G$-invariance of $\mu$ implies that
$g\mu_y=\mu_{g y},\ (g \in G)$.
Denoting by $\phi:Y\to M(X)$ the map $\phi(y)=\mu_y$
and letting $\ka=\phi_*(\nu)$, we see that
$\phi:(Y,\Ycal,\nu,G)\to (M(X),\ka,G)$ is an isomorphism.
The connection with $\mu$ is given by
\eqref{eq-bary} which says that
$\mu$ is the barycenter of $\ka$. 

\br

 \begin{defn}
 A {\bf  quasi-factor}
of $(X,\Xcal,\mu,G)$ is any $G$-invariant measure $\kappa$ on $M(X)$ whose barycenter is $\mu$. 
 It is called a {\bf joining quasi-factor} when  the joining
$$
\ka'=\int_M (\tet\times\del_\tet) \, d\ka(\tet),
$$
of the systems $(X,\mu)$ and $(M,\ka)$, is ergodic.
 \end{defn}
 Thus every factor is canonically represented as a quasi-factor, 
 but in general, an ergodic system may posses quasi-factors which are not factors.
 Like factors, quasifactors
inherit some dynamical properties. E.g. zero entropy
and distality are preserved under a passage to a quasi-factor.
We refer the readers to the above sources for the basic results concerning these notions.
%
We can now state our key theorem.

\br

\begin{thm}\label{main}
Let $G$ be a locally compact second countable topological group.
Let $\Xb = \XmuG$ and $\Yb=\YnuG$
be two ergodic probability measure preserving dynamical systems.
Suppose also that $\Yb$ is disjoint from every ergodic joining quasi-factor of $\Xb$.
Let $\Qb = (Q, \eta, G)$ be an intermediate factor as in the following commutative diagram:
\begin{equation*}
\xymatrix
{
\Yb \times \Xb \ar[dd]_{\pi_Y}\ar[dr]^{\phi}  & \\
 & \Qb \ar[dl]^{\theta}\\
\Yb &
}
\end{equation*}
Then $\Qb \cong \Yb \times \Zb$ with $\Zb = \ZzetaG$ a joining quasi-factor of $\Xb$.
\end{thm}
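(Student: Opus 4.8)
The plan is to realize the intermediate factor $\Qb$ concretely as a sub-object of $Y\times M(X)$ built from the conditional measures of $\nu\times\mu$, and then to use the disjointness hypothesis to split that copy as a product. First I would disintegrate $\nu\times\mu$ over the factor $\eta$ determined by $\phi$, writing $\nu\times\mu=\int_Q\sigma_q\,d\eta(q)$. Since $\theta\circ\phi=\pi_Y$, every fiber $\phi^{-1}(q)$ lies inside $\{\theta(q)\}\times X$, so each conditional measure has the form $\sigma_q=\delta_{\theta(q)}\times\rho_q$ with $\rho_q\in M(X)$; the standing hypotheses on $G$ and on the spaces guarantee this disintegration exists and that $M(X)$ is a standard Borel $G$-space. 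Setting $\psi(q)=\rho_q$ defines a Borel map $\psi\colon Q\to M(X)$, and the $G$-invariance of $\nu\times\mu$ gives $g\sigma_q=\sigma_{gq}$, hence $\psi(gq)=g\psi(q)$, so $\psi$ is $G$-equivariant. Put $\ka=\psi_*\eta$. Taking $X$-marginals in the disintegration yields $\int_Q\rho_q\,d\eta(q)=\mu$, i.e. the barycenter of $\ka$ is $\mu$, so $\Zb:=(M(X),\ka,G)$ is a quasi-factor of $\Xb$; this is the candidate factor in the conclusion.

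The step I expect to be the main obstacle is verifying that $\Zb$ is an \emph{ergodic joining} quasi-factor, since this is exactly what makes the disjointness hypothesis applicable. For ergodicity I would first note that $\Xb$ is its own trivial ergodic joining quasi-factor via $x\mapsto\delta_x$, so the hypothesis gives that $\Yb$ is disjoint from $\Xb$; writing the ergodic decomposition of $\nu\times\mu$ and projecting to each coordinate, extremality of the ergodic measures $\mu$ and $\nu$ forces every ergodic component to project to $\mu$ and to $\nu$, hence to be a joining of $\Xb$ and $\Yb$, hence to equal $\nu\times\mu$; thus $\nu\times\mu$ is ergodic, and so are its factor $\eta$ and the pushforward $\ka$. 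To see that $\ka$ is a joining quasi-factor I would exhibit its canonical joining $\ka'=\int_{M(X)}(m\times\delta_m)\,d\ka(m)$ on $X\times M(X)$ as a factor of the ergodic system $\Yb\times\Xb$, via the $G$-equivariant map
\[
\Theta\colon Y\times X\to X\times M(X),\qquad \Theta(y,x)=\bigl(x,\ \psi(\phi(y,x))\bigr).
\]
A direct computation of the law of $\Theta(y,x)$ (the $M(X)$-coordinate is $\psi(q)$ with $q=\phi(y,x)\sim\eta$, and the conditional law of the $X$-coordinate given $\phi(y,x)=q$ is exactly $\rho_q$) shows $\Theta_*(\nu\times\mu)=\ka'$. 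Ergodicity of $\nu\times\mu$ therefore descends to $\ka'$, so $\ka$ is an ergodic joining quasi-factor of $\Xb$.

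Finally I would assemble the product decomposition. The pair $(\theta,\psi)\colon Q\to Y\times M(X)$ is $G$-equivariant, and $(\theta,\psi)_*\eta$ projects to $\nu$ and to $\ka$, so it is a joining of $\Yb$ with $\Zb$. Since $\Zb$ is an ergodic joining quasi-factor of $\Xb$, the disjointness hypothesis forces this joining to be the product, $(\theta,\psi)_*\eta=\nu\times\ka$. It remains to check that $(\theta,\psi)$ generates the full factor algebra $\Qcal\subseteq\Ycal\otimes\Xcal$, i.e. that $(\theta,\psi)$ is injective modulo $\eta$-null sets: this is the general fact, recalled for factors in the passage preceding the theorem, that a factor is faithfully recovered from its disintegration $q\mapsto\sigma_q=\delta_{\theta(q)}\times\rho_q$ (distinct $q$ over the same $\theta$-value give mutually singular $\rho_q$, as their fibers are disjoint), so $\Qcal=\sigma(\theta,\psi)$. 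Combining, $(\theta,\psi)$ is an isomorphism of $\Qb$ onto $(Y\times M(X),\nu\times\ka,G)=\Yb\times\Zb$, which is the desired conclusion.
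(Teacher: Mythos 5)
Your proposal is correct and follows essentially the same route as the paper: the same disintegration $\nu\times\mu=\int\sigma_q\,d\eta(q)$ with $\sigma_q=\delta_{\theta(q)}\times\rho_q$, the same candidate quasi-factor $\ka=\psi_*\eta$, the same support/disjoint-fiber argument for injectivity of $(\theta,\psi)$, and your map $\Theta(y,x)=\bigl(x,\psi(\phi(y,x))\bigr)$ is just a streamlined two-coordinate version of the paper's map $\Phi$ followed by the projection $P_{2,3}$. If anything, your write-up is slightly tighter than the paper's: you explicitly derive ergodicity of $\nu\times\mu$ from the hypothesis (via $x\mapsto\delta_x$ realizing $\Xb$ as its own ergodic joining quasi-factor), and you establish the joining-quasi-factor property \emph{before} invoking disjointness, whereas the paper appeals to the hypothesis first and verifies that property afterwards.
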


\br

In Section \ref{sec-ergodic} we will prove our key theorem, examine several applications, 
and then consider the question ``when the quasi-factor $\Zb$ 
is actually a factor of $\Xb$". In Section \ref{sec-distal} we establish a positive result (in the topological 
setup) for distal systems, and in Section \ref{sec-cex} we provide some topological counterexamples. 
We thank Valery Ryzhikov for some helpful remarks.

\br

\section{Ergodic theory}\label{sec-ergodic}

\begin{proof}[Proof of theorem \ref{main}]
Given $q \in Q$ let
$$
\bar{q} : = \{x \in X : \phi(\theta(q), x) = q\},
$$
so that the fiber in $Y \times X$ over the point $q \in Q$ has the form
$$
\phi^{-1}(q) =\{\theta(q)\} \times  \bar{q}.
$$
Now let us denote $\la = \nu \times \mu$ and let
$$
\la = \int \la_q \, d\eta(q)
$$
be the disintegration of $\la$ over $\eta$.
Note that $\la_q$ has the form $\la_q  = \del_{\theta(q)} \times \tilde{\la}_q$ for a measure $\tilde{\la}_q$ on $X$
which is supported on the subset $\bar{q}$ of $X$.
Let $\pi : q \mapsto \tilde{\la}_q$ be the corresponding map
from $Q$ into the space $M(X)$ of Borel probability measures on $X$,
and let $\zeta$  denote the measure on $M(X)$ which is the push-forward of $\eta$
under $\pi$, namely $\zeta = \pi_*(\eta)$.
Here $M(X)$ is endowed with its natural Borel structure and is equipped with the induced $G$-action.
We let $\Zb = \ZzetaG$ denote the ergodic system $(M(X), \zeta, G)$.
The system $\Zb$ is a quasi-factor of $\Xb$, in fact
$$
\int \tilde{\la}_q\, d\eta(q) = (\pi_X)_* \left(\int  \del_{\theta(q)} \times \tilde{\la}_q \, d\eta(q)\right) =
 (\pi_X)_*\left( \int \la_q \, d\eta(q)\right) =  (\pi_X)_*(\la) = \mu.
  $$
Therefore, by assumption, $\Zb$
is also disjoint from $\Yb$. It follows that the ergodic product system $\Yb \times \Zb$ is a factor of the system
$(Q, \eta, G)$ under the factor map $J : q  \mapsto(\theta(q), \tilde{\la}_q)$.
We claim that $J$ is an isomorphism.
To see this suppose $J(q) = J(q')$; then $y := \theta(q) = \theta(q')$ and  $\tilde{\la}_q = \tilde{\la}_{q'}$.
However, we have
$$
\bar{q} \supset \supp(\tilde{\la}_q)=\supp(\tilde{\la}_{q'}) \subset \bar{q'}
$$
and since for $\nu$-almost every $y \in Y$ the sets $\{\bar{q} : \theta(q) =y\}$ form a partition
of the fiber $\pi_Y^{-1}(y)$, it follows that $q = q'$.
Thus indeed $(Q, \eta,G) \cong \Yb \times \Zb$

%
%

We will now show that $\Zb$ is a joining quasi-factor.
By the definition of a joining quasi-factor,
we need to show that the measure
$$
\zeta' := \int_{M(X) \times Z} (z \times \delta_z) \, d \zeta(z)  =  \int \left(\tilde{\la}_q \times \del_{\tilde{\la}_q}\right) d\, \eta(q)
 $$
 is ergodic.
 (Warning: in the integrand  of  the expression $ \int (z \times \delta_z )\,  d \zeta(z)$ the first $z$
 is a measure on $X$.)

 We have shown in Theorem \ref{main} that $\Qb \cong \Yb \times \Zb$, so let
 $\pi_\Zb : \Qb \to \Zb$ denote the corresponding projection, so that $\pi_{\Zb}(y,x) =  \tilde{\la}_{\phi(y,x)}$.
 Define a map
 \begin{gather*}
 \Phi : Y \times X \to M(Y) \times M(X) \times M(Z), \quad \\
 \Phi(y,x) = \del_{\theta(\phi(y,x))} \times \tilde{\la}_{\phi(y,x)} \times \del_{\tilde{\la}_{\phi(y,x)}}
 = \del_{\theta(q)} \times \tilde{\la}_q \times \del_{\tilde{\la}_{q}}.
 \end{gather*}
 This map is clearly a factor map and formally the push forward measure  $\Phi_*(\la)$,
 is a measure on  $M(Y) \times M(X) \times M(Z)$. However, since  $\del_{\theta(q)}$ and $ \del_{\tilde{\la}_{q}}$
 are point masses on $Y$ and $Z$ respectively, this latter measure can be considered
 as a measure in $M(Y) \times M(M(X)) \times M(Z)$. Moreover,
 the value $\tilde{\la}_{\phi(y,x)}$ on the set $\{(y,x) : \phi(y,x) =q\}$ is fixed with value
 $\tilde{\la}_{\phi(y,x)} = \tilde{\la}_q$. Thus we can finally think of the measure
 $\Phi_*(\la)$ as a measure on $M(Y) \times M(X) \times M(Z)$.
 As a push forward of $\la$ it is ergodic.

 We will next check
 that $P_{2,3} ( \Phi_*(\la)) = \zeta'$,
 where $P_{2,3} : M(Y) \times M(X) \times M(Z) \to M(X) \times M(Z)$
 is the natural projection,
 thereby proving the ergodicity of $\zeta'$.

 Indeed, we have
$$
 \Phi_*(\la)
=   \int  \left(\del_{\theta(\phi(y,x))} \times \tilde{\la}_{\phi(y,x)} \times \del_{\tilde{\la}_{\phi(y,x)}} \right)
\, d\la(y,x)
$$
and projecting with $P_{2,3}$ we get
$$
P_{2,3} ( \Phi_*(\la))  =
 \int  \left( \tilde{\la}_{\phi(y,x)} \times \del_{\tilde{\la}_{\phi(y,x)}} \right) \, d\la(y,x) =
 \zeta'.
$$
\end{proof}

\br
%
%

We will use the following notations:
For an amenable group $G$ let
\begin{itemize}
\item
$\Kcal$ denote the class of K-systems.
\item
$\Zcal$ the class of zero entropy ergodic systems.
\item
$\Wcal$ the class of weakly mixing systems.
\item
$\Dcal$ the class of ergodic distal systems.
\item
$\Mcal\Mcal$ the class of mildly mixing systems.
\item
$\Rcal$ the class of ergodic rigid systems.
\end{itemize}

\br

It is well known that for an amenable group $G$ the following relations hold:

$\Kcal\, \bot \, \Zcal$,  $\Wcal \, \bot \, \Dcal$ and $\Mcal\Mcal \, \bot \, \Rcal$ (see \cite{LPT},  \cite{Gl-03}).
Also the classes $\Zcal$ and $\Rcal$ are closed under quasi-factors
and the class $\Dcal$ is closed under joining quasi-factors
(see \cite{GW-95} and  \cite[Theorem 10.19]{Gl-03}).
Thus, we have the following:

\begin{cor}
When $G$ is amenable and
\begin{itemize}
\item
$\Yb \in \Kcal$ and $\Xb \in \Zcal$,
\item
$\Yb \in \Wcal$ and $\Xb \in \Dcal$,
\item
$\Yb \in \Mcal\Mcal$ and $\Xb \in \Rcal$,
\end{itemize}
then our theorem holds.
\end{cor}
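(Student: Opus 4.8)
The plan is to observe that the Corollary is an immediate consequence of Theorem \ref{main}: in each of the three cases the only thing that needs checking is the single standing hypothesis of that theorem, namely that $\Yb$ is disjoint from every ergodic joining quasi-factor of $\Xb$. Once this is verified, the conclusion $\Qb \cong \Yb \times \Zb$ follows verbatim. So I would organize the argument entirely around establishing this disjointness in each case, drawing on the two inputs recalled just above the statement: the class-level disjointness relations $\Kcal \bot \Zcal$, $\Wcal \bot \Dcal$, $\Mcal\Mcal \bot \Rcal$ valid for amenable $G$, and the relevant closure properties under (joining) quasi-factors.

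For the first and third cases I would argue as follows. Let $\Zb$ be an arbitrary ergodic joining quasi-factor of $\Xb$; in particular $\Zb$ is a quasi-factor of $\Xb$. In the first case we have $\Xb \in \Zcal$, and since $\Zcal$ is closed under quasi-factors, $\Zb \in \Zcal$ as well. As $\Yb \in \Kcal$ and $\Kcal \bot \Zcal$, the system $\Yb$ is disjoint from $\Zb$. The third case is identical with $\Zcal$ replaced by $\Rcal$ (again closed under all quasi-factors) and $\Kcal$ replaced by $\Mcal\Mcal$, invoking $\Mcal\Mcal \bot \Rcal$. In both cases every ergodic joining quasi-factor of $\Xb$ lands in the class orthogonal to the one containing $\Yb$, so the hypothesis of Theorem \ref{main} holds and the theorem applies.

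The second case is the one requiring a little care, and is the point I would flag as the main subtlety. Here the closure statement available to us is weaker: $\Dcal$ is asserted to be closed only under joining quasi-factors, not under arbitrary quasi-factors. Fortunately this is exactly what Theorem \ref{main} asks for, since that theorem quantifies precisely over ergodic joining quasi-factors. Thus for an arbitrary ergodic joining quasi-factor $\Zb$ of $\Xb \in \Dcal$ we obtain $\Zb \in \Dcal$ by closure, and then $\Yb \in \Wcal$ together with $\Wcal \bot \Dcal$ yields disjointness of $\Yb$ and $\Zb$. Again the hypothesis of Theorem \ref{main} is met and the theorem delivers the conclusion. The only thing genuinely worth double-checking is this matching of the ``joining quasi-factor'' quantifier in the theorem with the ``joining quasi-factor'' scope of the distality closure result; once that alignment is confirmed, the three cases close uniformly and the Corollary follows.
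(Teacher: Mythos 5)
Your proposal is correct and follows exactly the paper's (implicit) reasoning: the paper derives the corollary directly from the class disjointness relations $\Kcal \, \bot \, \Zcal$, $\Wcal \, \bot \, \Dcal$, $\Mcal\Mcal \, \bot \, \Rcal$ together with the closure of $\Zcal$ and $\Rcal$ under quasi-factors and of $\Dcal$ under joining quasi-factors, which is precisely your case analysis. Your observation that the weaker closure of $\Dcal$ (only under joining quasi-factors) suffices because Theorem \ref{main} quantifies exactly over ergodic joining quasi-factors is the right point to flag, and it matches why the theorem is stated with that hypothesis.
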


\br

We now pose the following:

\begin{question}
Let $\Qb \cong \Yb \times \Zb$ be as in Theorem \ref{main},
when is $\Zb$ necessarily a factor of $\Xb$ ?

\end{question}

The following table sums up what we know regarding this question for the classes mentioned above.

(i) $\Yb \in \Kcal$ and $\Xb \in \Zcal$  $\imp$ Yes.

(ii) $\Yb \in \Wcal$ and $\Xb \in \Dcal$  $\imp$ Yes.

(iii) $\Yb \in \Mcal\Mcal$ and $\Xb \in \Rcal$  $\imp$ Yes.

(iv) $\Yb \in \Zcal$ and $\Xb \in \Kcal$  $\imp$ No.

(v) $\Yb \in \Dcal$ and $\Xb \in \Wcal$  $\imp$ No.

(vi) $\Yb \in \Rcal$ and $\Xb \in \Mcal\Mcal$  $\imp$ No.

\br

To justify (i) note that in this case $\Xb$ coincides with the Pinsker factor of $\Yb \times \Xb$
and as $\Zb$, a quasi-factor of $\Xb$, has zero entropy, it follows that $\Xb \to \Zb$.

A similar argument applies for the claim (ii), where now we use the
fact that $\Xb$ is the largest distal factor of $\Yb \times \Xb$, and the
fact that $\Zb$, a quasi-factor of $\Xb$, is also distal, hence a factor of $\Xb$.

Finally for (iii) (in the case of $\Z$-actions), recall that the system $\Yb$ is mildly mixing iff for every IP-sequence $\{n_\al\}$
there is a sub-IP-sequence $\{n_\beta\}$ along which $S^{n_\al} \to   \int \cdot \,d\nu$ on
$L^2(\nu)$ (this follows easily from \cite[Proposition 9.22]{Fur}).
If we are now given a rigid function $f \in L^\infty(\Yb \times \Xb)$, say
$(S \times T)^{n_\al}f \to f$ in $L^2(\nu \times \mu)$ for an IP-sequence $\{n_\al\}$,
 then,  in the direct product $\Yb \times \Xb$, along an appropriate
sub-IP-sequence $\{n_\beta\}$ we see that
$f = \lim (S \times T)^{n_\beta} f$ is $\Xcal$-measurable.
It then follows that every $\Zcal$-measurable function in $L^\infty(\nu \times \mu)$ is
$\Xcal$-measurable, hence here also $\Zb$ is a factor of $\Xb$.

\br

\br

Just one counterexample will justify the negative claims (iv), (v) and (vi)
(and with $\Qb = \Yb \times \Xb$).
(Note however, that the classes $\Kcal$, $\Wcal$ and $\Mcal\Mcal$ are, in general, not closed
under passage to quasi-factors.)

This example is basically due to Dan Rudolph and the implication to our
setup was already noted by Kenneth Berg, \cite{Berg}.
We thank Jean-Paul Thouvenot for helpful discussions clarifying the details of this example.
For completeness we will next explain this implication.

\begin{exa}
There exist two  $K$-automorphism $\Xb_i,\ i=1,2$, neither one is is a factor of the other,
and a zero entropy system $\Yb$
such that $\Yb \times \Xb_1 \cong \Yb \times \Xb_2$.
\end{exa}

\br

We see this as follows.  For a measure preserving system $\Xb = (X, \Xcal, \mu, T)$
let $\hat{T} : X \times \{0,1\} \to  X \times \{0,1\}$ be defined by
$$
\hat{T}(x,0) = (x,1) \quad {\text{and}} \quad \hat{T}(x,1) = (Tx,0).
$$
Note that $\hat{T}^2 = T \times \Id$.
Also, for a measure preserving system $\Yb = (Y, \Ycal, \nu, S)$
let $\tilde{S} : Y \times \{0,1\} \to  Y \times \{0,1\}$ be defined by
$$
\tilde{S}(y,0) = (Sy,1) \quad {\text{and}} \quad \tilde{S}(y,1) = (Sy,0),
$$
so that $\tilde{S} = S \times$ flip.

\begin{lem}\label{S^2}
The transformation $T$ has a square root $S$ (i.e. $T = S^2$) iff $\hat{T} \cong \tilde{S}$.
\end{lem}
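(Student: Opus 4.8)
The forward implication is a direct construction; the reverse one is the real content.

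For the forward direction I assume $T = S^2$ (so $Y = X$, $\nu = \mu$) and exhibit the conjugacy explicitly: define $\Psi : X \times \{0,1\} \to X \times \{0,1\}$ by $\Psi(x,0) = (x,0)$ and $\Psi(x,1) = (Sx,1)$. This $\Psi$ is a measure-preserving automorphism (the identity on the $0$-level, the $\mu$-preserving map $S$ on the $1$-level), and a one-line check on each of the two levels gives $\Psi \circ \hat{T} = \tilde{S} \circ \Psi$: on the $0$-level both sides send $(x,0)$ to $(Sx,1)$, and on the $1$-level both sides send $(x,1)$ to $(S^2x,0) = (Tx,0)$, the last equality being exactly the hypothesis $T = S^2$. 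Hence $\hat{T} \cong \tilde{S}$.

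For the reverse direction the starting point is the identities $\hat{T}^2 = T \times \Id$ and $\tilde{S}^2 = S^2 \times \Id$ already recorded above, so any isomorphism $\Psi : \hat{T} \to \tilde{S}$ automatically conjugates $T \times \Id$ to $S^2 \times \Id$; that is, two disjoint copies of $\Xb=(X,T)$ become isomorphic to two disjoint copies of $(Y,S^2)$. The task is then to cancel this doubling and recover $(X,T) \cong (Y,S^2)$, for then transporting $S$ through the resulting isomorphism yields a transformation $S'$ on $X$ with $(S')^2 = T$, i.e. a square root of $T$. To perform the cancellation I would track the parity coordinate: the function $e(x,i) = (-1)^i$ satisfies $e \circ \hat{T} = -e$, and likewise $d(y,j) = (-1)^j$ satisfies $d \circ \tilde{S} = -d$, so $g := e \circ \Psi^{-1}$ is a $\{\pm 1\}$-valued $(-1)$-eigenfunction of $\tilde{S}$. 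The product $g\,d$ is then $\tilde{S}$-invariant, which forces $\Psi$ to carry the level partition of $X \times \{0,1\}$ onto a partition of $Y \times \{0,1\}$ into two $\tilde{S}^2$-invariant halves of equal mass, each of which I would identify with a copy of $(Y,S^2)$ by using the map $\tilde{S}$ (i.e. $S$) to match the two levels. Restricting $\Psi$ to $X \times \{0\}$ then produces the desired isomorphism $(X,\mu,T) \cong (Y,\nu,S^2)$.

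The main obstacle is precisely this cancellation. Passing to squares yields only $(X,T)\sqcup(X,T) \cong (Y,S^2)\sqcup(Y,S^2)$, and for general systems disjoint unions need not cancel; the point is that one must exploit the full conjugacy $\Psi \hat{T} = \tilde{S}\Psi$ — not merely its square — through the parity eigenfunction, in order to control how $\Psi$ permutes the two levels. When $T$ is ergodic (the case in the intended application, where $\Xb$ is a $K$-automorphism) this simplifies: $\hat{T}$ and $\tilde{S}$ are then ergodic, $-1$ is a simple eigenvalue, and $T \times \Id$, $S^2 \times \Id$ each have exactly two ergodic components, so $\Psi$ must map the two levels of the source onto the two levels of the target, and restriction to either level gives $(X,T) \cong (Y,S^2)$ immediately.
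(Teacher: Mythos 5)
Your proof is correct, and its converse half takes a genuinely different, more careful route than the paper's. The forward direction is the paper's argument up to a harmless change of conjugating map: the paper uses the level-swapping $\theta(x,0)=(x,1)$, $\theta(x,1)=(Sx,0)$, while you use the level-preserving $\Psi(x,0)=(x,0)$, $\Psi(x,1)=(Sx,1)$; both are the same one-line check. For the converse, the paper simply squares the isomorphism and, from $\hat{T}^2=T\times\Id$ and $\tilde{S}^2=S^2\times\Id$, asserts $T\cong S^2$ outright. You correctly observe that this last step is a cancellation of disjoint unions ($T\sqcup T\cong S^2\sqcup S^2$ does not formally yield $T\cong S^2$), and you repair it in two ways: in the ergodic case (the only one the paper actually needs, since there $T=T_1^2$ with $T_1$ a $K$-automorphism) by counting ergodic components, and in general by exploiting the full conjugacy $\Psi\hat{T}=\tilde{S}\Psi$ rather than only its square. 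Your general sketch does flesh out: $E:=\Psi(X\times\{0\})$ is $\tilde{S}^2$-invariant of mass $1/2$, and since $X\times\{1\}=\hat{T}(X\times\{0\})$ its complement is $\tilde{S}E$; writing $E=(A_0\times\{0\})\cup(A_1\times\{1\})$ this gives $SA_1=Y\setminus A_0$, so the map $(y,0)\mapsto y$, $(y,1)\mapsto Sy$ identifies $(E,\tilde{S}^2)$ with $(Y,S^2)$, and composing with $\Psi$ restricted to $X\times\{0\}$ yields $(X,T)\cong(Y,S^2)$ with no ergodicity assumed --- which is what the unqualified statement of the lemma requires. One local correction: the $\tilde{S}$-invariance of $g\,d$ is not what makes $E$ and its complement $\tilde{S}^2$-invariant halves (that follows already from $X\times\{0\}$ being $\hat{T}^2$-invariant); the eigenfunction $g$ earns its keep only in your ergodic shortcut, where simplicity of the eigenvalue $-1$ forces $g=\pm d$, hence $\Psi$ maps levels to levels. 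In sum, the paper's converse buys brevity and suffices for its intended application; yours buys the lemma as literally stated, in full generality.
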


\begin{proof}
Suppose first that $T = S^2$. Define $\theta : X \times \{0,1\} \to  X \times \{0,1\}$ by
$$
\theta(x,0) = (x,1) \quad {\text{and}} \quad \theta(x,1) = (Sx,0).
$$
Then $\tilde{S} \circ \theta = \theta \circ \hat{T}$, so that $\theta$ defines an isomorphism between $\hat{T}$and $\tilde{S}$.

Conversely, if $\hat{T}$ is isomorphic to some
$\tilde{S} = S \times $ flip, then $\hat{T}^2 \cong \tilde{S}^2$.
Since $\tilde{S}^2(y,i) = (S^2y, i)$ and $\hat{T}^2(x,i) = (Tx, i)$ it follows that $T \cong S^2$, so that $T$ has a square root.
\end{proof}
In \cite{Ru-76} Rudolph constructs two non-isomorphic $K$-automorphisms $T_1, T_2$ such that $T_1^2 =  T_2^2$.
On a close examination of his proof it can be checked  that
neither $T_1$ nor $T_2$ is a factor of the other transformation.
Let us denote $T = T_1^2 =  T_2^2$ and then deduce from Lemma \ref{S^2} that
$\hat{T} \cong \tilde{S}_1 = S_1 \times {\text{flip}} = S_2 \times {\text{flip}} = \tilde{S}_2$.

For our example we now take $\Yb = (\{0,1\}, {\text{flip}})$ and $\Xb_i = (X, \Xcal, \mu, S_i), \ i=1,2$.
\qed

\begin{rmk}
In his work \cite{Th-75} Thouvenot has shown that
any factor of a system of the form
      Bernoulii $\times$ zero entropy
is again of the form
     Bernoulli $\times$ zero entropy.
     We can use a similar argument to show that this result can not be extended to the class of $K$-automorphisms.
     To see this we first observe that for any measure preserving transformation $T$, the product
     transformation $T \times T$ always has a root, namely with
     $$
     R(x,x') =(Tx',x)
     $$
     we have $R^2 = T \times T$.
Now let $T$ be a $K$-automorphism with no square root. Then $T$ is a factor of $T \times T$ and thus also
$\hat{T}$ is a factor of $\widehat{T \times T}$. Since $T\times T$ has a square root
$\widehat{T \times T} \cong \tilde{R} = R \times {\text{flip}}$ --- a product of a $K$-automorphism with the flip,
a zero entropy system. However its factor $\hat{T}$ is not of this form.
\qed
\end{rmk}

\br

\section{Topological dynamics; the distal case}\label{sec-distal}
A {\em topological dynamical system}
is a pair $\Xb =(X,G)$ where $X$ is a compact space and the group $G$
acts on $X$ via a homomorphism of $G$ into
the group $\Homeo(X)$ of self homeomorphisms of $X$.
Unless we say otherwise we assume that our systems are metrizable.
A factor map $\pi : \Xb \to \Yb = (Y,G)$ between two such systems
is a continuous surjective map satisfying $g \circ \pi = \pi \circ g$ for every $g \in G$.
The system $\Xb$ is {\em minimal} when every $G$-orbit $\{gx :  g \in G\}$ is dense.
Two minimal systems $\Xb$ and $\Yb$ are {\em disjoint} if the product system
$(X \times Y, G)$ (with diagonal action) is minimal.
%
A pair of points $(x_1,x_2) \in X \times X$ is {\em proximal} if the orbit closure
$\ol{\{g(x_1,x_2) : g \in G\}}$ meets the diagonal $\Del_X = \{(x,x) : x \in X\}$.
The collection $P \subset X \times X$ of all proximal pairs is
called the {\em proximal relation}. The system $\Xb$ is {\em distal}
when $P = \Del_X$.

The {\em enveloping semigroup}  $E(X,G)$ of $(X,G)$ is the closure in the product space
$X^X$ of the collection $\{\Breve{g} : g \in G\}$, where $\Breve{g}$ is the image of $g$ in $\Homeo(X)$.
This is both a compact right-topological-semigroup and a $G$  dynamical system
(note however that the topological space $E(X,T)$ is usually non-metrizable).
It has a rich topological and algebraic structure
and serves as an important tool in studying the asymptotic features of a dynamical system.
A theorem of Ellis, \cite{E-58} asserts that a minimal system $(X,G)$ is distal if and only if $E(X,G)$ is a group.
It follows that when $(X,G)$ is minimal and distal then the dynamical system $(E(X,G), G)$ is also
minimal and distal.

With a minimal dynamical system $(X,G)$ there is a naturally associated system $(2^X,G)$ on the compact
space $2^X$ comprising the closed subsets of $X$. A minimal subsystem $Z \subset 2^X$ is called a {\bf
quasi-factor} of $(X,G)$.
We have the following theorem from \cite[Theorem 2.5]{Gl-75}.

\begin{thm}\label{QF}
A quasi-factor of a minimal distal system $(X,G)$ is a factor of the dynamical system $(E(X,G), G)$.
In particular, every quasi-factor of a minimal distal system is distal.
\end{thm}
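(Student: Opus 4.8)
The plan is to realize the quasi-factor $Z$ as a factor of $(E(X,G),G)$ by pushing forward a single point $A_0\in Z$. Write $E=E(X,G)$. By the theorem of Ellis quoted above, since $(X,G)$ is minimal and distal, $E$ is a group and $(E,G)$ is itself minimal and distal; in particular every element $p\in E$ is a bijection $X\to X$, so for a closed set $A\subset X$ the image $p(A)=\{p(a):a\in A\}$ and its closure $\ol{p(A)}$ are defined. I would fix $A_0\in Z$ and define $\Psi:E\to 2^X$ by $\Psi(p)=\ol{p(A_0)}$. Since $\Psi(\Breve g)=gA_0$ and $\Psi(\Breve g\,p)=\ol{g\,p(A_0)}=g\,\ol{p(A_0)}=g\Psi(p)$ (as $g$ is a homeomorphism), $\Psi$ is $G$-equivariant and extends the orbit map $g\mapsto gA_0$; granting continuity, $\Psi(E)=\Psi(\ol{\{\Breve g\}})\subset\ol{GA_0}=Z$ and, being a compact set containing the dense orbit $GA_0$, equals $Z$. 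Thus $\Psi$ is a factor map of $(E,G)$ onto $(Z,G)$, and the ``in particular'' clause is then immediate: $(Z,G)$, as a factor of the distal system $(E,G)$, is distal.

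So everything reduces to the continuity of $\Psi$. To obtain it cleanly I would compare $\Psi$ with the evaluation map on the enveloping semigroup of the hyperspace system. Since the singletons form a closed $G$-invariant copy of $X$ inside $2^X$, every element of $E(2^X,G)$ preserves singletons, and restricting to them gives a continuous surjective homomorphism of $G$-systems $\rho:E(2^X,G)\to E$, $p\mapsto\ol p$, where $\ol p$ is the induced self-map of $X$. On the other hand, as $Z=\ol{GA_0}$ with $Z$ minimal, the point $A_0$ is almost periodic in $(2^X,G)$, and the evaluation $\mathrm{ev}:E(2^X,G)\to Z$, $p\mapsto pA_0$, is continuous, $G$-equivariant and onto. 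The continuity of $\Psi$ follows once I prove the Key Lemma: for every $p\in E(2^X,G)$ one has $pA_0=\ol{\ol p\,(A_0)}$, equivalently $pA_0$ depends only on $\rho(p)\in E$. Granting this, $\mathrm{ev}=\Psi\circ\rho$; and since $\rho$ is a continuous surjection of compact Hausdorff spaces it is a quotient map, so the continuity of $\mathrm{ev}$ forces $\Psi$ to be continuous, completing the argument above.

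The Key Lemma is the heart of the matter and the step I expect to be hardest. Writing $p=\lim_i\Breve{g_i}$ in $E(2^X,G)$, one has $pA_0=\lim_i g_iA_0$ in the Hausdorff topology while $\ol p(x)=\lim_i g_ix$ for each $x$. The inclusion $\ol{\ol p(A_0)}\subset pA_0$ is immediate, since for $a\in A_0$ the points $g_ia\in g_iA_0$ converge to $\ol p(a)$. The reverse inclusion is where distality and the almost periodicity of $A_0$ must both enter: a point $b\in pA_0$ has the form $b=\lim_i g_ia_i$ with $a_i\in A_0$, and after passing to a subnet $a_i\to a\in A_0$; the claim is that $b=\ol p(a)$. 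The naive hope that $g_ia_i$ and $g_ia$ stay close fails for a general closed set, because the $g_i$ are far from equicontinuous, so one cannot argue by continuity. Instead I would show that $(b,\ol p(a))$ is a proximal pair and then invoke distality, $P=\Del_X$, to conclude $b=\ol p(a)$. To produce the proximality I would exploit that $A_0$ is almost periodic: its orbit returns to every neighbourhood of $A_0$ along a syndetic set of $G$, which constrains the net $\{g_i\}$ enough that the perturbation $a_i\to a$ cannot be opened up in the limit.

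Making this quantitative --- turning the syndetic recurrence of $A_0$ into the proximality of $(b,\ol p(a))$ --- is the real obstacle, and it is precisely here that the hypothesis that $(X,G)$ is distal (rather than merely minimal) is indispensable; the counterexamples promised in Section \ref{sec-cex} show that some such hypothesis cannot be dropped.
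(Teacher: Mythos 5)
Your reduction scheme is sound as far as it goes, and it is in fact essentially the scheme of the original proof: the paper itself gives no argument for this theorem but quotes it from \cite[Theorem 2.5]{Gl-75}, where the quasi-factor is realized as a factor of $(E(X,G),G)$ in just this way. Your preparatory steps are all correct: the restriction map $\rho:E(2^X,G)\to E(X,G)$ is a well-defined continuous surjective homomorphism of $G$-systems (hence a quotient map of compact Hausdorff spaces), the evaluation $p\mapsto pA_0$ is continuous onto $Z$, and granted your Key Lemma the factorization $\mathrm{ev}=\Psi\circ\rho$ yields the theorem together with its ``in particular'' clause. But the Key Lemma is exactly where your proof stops, and the route you sketch for its reverse inclusion is not merely incomplete --- it aims at a false statement. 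You take $b=\lim g_ia_i\in pA_0$, pass to a subnet with $a_i\to a\in A_0$, and hope to prove $b=\ol{p}(a)$ by showing $(b,\ol{p}(a))$ is proximal. Consider the minimal distal skew product $T(x,y)=(x+\alpha,y+x)$ on $\T^2$ with $A_0=X$ (so $Z=\{X\}$ is a perfectly good minimal quasi-factor): taking $a_i=(x+\delta_i,y)$ with $\delta_i\to 0$ and $g_i=T^{n_i}$ with $n_i\delta_i\to 1/2$, and passing to a subnet along which everything converges, one finds that $b=\lim g_ia_i$ and $\ol{p}(a)=\lim g_ia$ differ by $(0,1/2)$ in the second coordinate. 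Thus $(b,\ol{p}(a))$ is a pair of \emph{distinct} points of a distal system, hence not proximal, and no syndetic-recurrence property of $A_0$ can change this. The correct conclusion is only that $b=\ol{p}(a')$ for some \emph{other} point $a'\in A_0$, so the argument must be global rather than run through the particular limit point $a$.

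The actual mechanism closing the gap is algebraic, not proximal. Since $X$ is distal, $E(X,G)$ is a group (Ellis), so by surjectivity of $\rho$ you may choose $\tilde q\in E(2^X,G)$ with $\rho(\tilde q)=\rho(p)^{-1}$. The monotone inclusion you already verified, $rC\supseteq\ol{\rho(r)(C)}$ for every $r\in E(2^X,G)$ and closed $C$, gives $\tilde q\,p\,A_0\supseteq\ol{\rho(p)^{-1}\rho(p)(A_0)}=A_0$; now the standard antichain lemma --- two elements of a minimal subset of $2^X$ are never properly comparable (this is among the ``standard arguments'' of \cite{Gl-75} that the paper invokes in Section \ref{sec-distal}) --- forces $\tilde q\,p\,A_0=A_0$. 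Hence for $b\in pA_0$ one has $\rho(p)^{-1}(b)\in\ol{\rho(p)^{-1}(pA_0)}\subseteq\tilde q\,p\,A_0=A_0$, i.e.\ $b\in\rho(p)(A_0)$. This yields the sharper identity $pA_0=\rho(p)(A_0)=\ol{\rho(p)(A_0)}$, which is your Key Lemma, after which your continuity and surjectivity arguments go through verbatim. Note where distality really enters: through the group structure of $E(X,G)$ (in particular every idempotent of $E(2^X,G)$ projects to $\id_X$), not through an application of $P=\Del_X$ to individual limits of perturbed orbits.
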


We can now state and prove the main theorem of this section.

\begin{thm}
Consider the following commutative diagram:
\begin{equation*}
\xymatrix
{
\Yb \times \Xb \ar[dd]_{\pi_Y}\ar[dr]^{\phi}  & \\
 & \Qb \ar[dl]^{\theta}\\
\Yb &
}
\end{equation*}
where $\Xb=(X,G)$ is minimal and distal and $\Yb = (Y,G)$ is minimal and disjoint from $(E(X,G),G)$
(this is the case e.g. when $(Y,G)$  it is weakly mixing).
Then in this situation $\Qb \cong \Yb \times \Zb$ with $\Zb =(Z,G)$ a factor of $\Xb$.
\end{thm}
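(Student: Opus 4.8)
The plan is to run the disintegration argument of Theorem \ref{main} in the topological category, replacing the conditional measures by the fibres of $\phi$ themselves and replacing the disjointness-of-quasi-factors hypothesis by Theorem \ref{QF}. First I would record minimality. Since $(X,G)$ is minimal and distal, $(E(X,G),G)$ is minimal and distal and maps onto $(X,G)$ by $p\mapsto px_0$; as $\Yb$ is disjoint from $(E(X,G),G)$ the product $\Yb\times(E(X,G),G)$ is minimal, hence so is its factor $\Yb\times\Xb$, and therefore so is $\Qb$. Because $\theta\circ\phi=\pi_Y$, every fibre of $\phi$ lies over a single point of $Y$, so
$$
\phi^{-1}(q)=\{\theta(q)\}\times\bar q,\qquad \bar q:=\{x\in X:\phi(\theta(q),x)=q\}\in 2^X,
$$
and this defines a $G$-equivariant fibre map $\psi:\Qb\to 2^X$, $\psi(q)=\bar q$.

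The main technical point is the continuity of $\psi$, and this is where distality of $\Xb$ is used. I claim $\phi:\Yb\times\Xb\to\Qb$ is a distal extension: if $(w_1,w_2)$ is a proximal pair with $\phi(w_1)=\phi(w_2)$, then $\pi_Y(w_1)=\theta\phi(w_1)=\theta\phi(w_2)=\pi_Y(w_2)$, so $w_1=(y,x_1)$ and $w_2=(y,x_2)$ lie over a common $y$; projecting the proximality to $X$ shows $(x_1,x_2)$ is a proximal pair in the distal system $\Xb$, whence $x_1=x_2$ and $w_1=w_2$. Recalling that a distal extension of minimal flows is open, $\phi$ is open; being also closed, its fibres then vary continuously in the Hausdorff topology, that is, $\psi$ is continuous.

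Consequently $\Zb:=\psi(\Qb)\subset 2^X$ is a minimal subsystem, i.e. a quasi-factor of $\Xb$. By Theorem \ref{QF} it is a factor of $(E(X,G),G)$, in particular distal; and since $\Yb$ is disjoint from $(E(X,G),G)$, the product $\Yb\times\Zb$ is a factor of the minimal system $\Yb\times(E(X,G),G)$ and hence minimal, so $\Yb$ is disjoint from $\Zb$. Now put $J:=(\theta,\psi):\Qb\to\Yb\times\Zb$, $J(q)=(\theta(q),\bar q)$. This is continuous and equivariant; it is injective, since $\theta(q)=\theta(q')=y$ and $\bar q=\bar{q'}$ force $q=\phi(y,x)=q'$ for any $x\in\bar q$; and $J(\Qb)$ is a nonempty closed invariant subset of the minimal system $\Yb\times\Zb$, hence all of it. A continuous equivariant bijection of compact systems is an isomorphism, so $\Qb\cong\Yb\times\Zb$.

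It remains to upgrade $\Zb$ from a quasi-factor to a genuine factor of $\Xb$, and here the product structure does the work for free. Fix $z\in Z$. For every $y\in Y$ the point $q=J^{-1}(y,z)$ satisfies $\theta(q)=y$ and $\bar q=z$, so the closed set $z\subset X$ equals $\{x:\phi(y,x)=q\}$, a fibre of $\phi(y,\cdot)$; letting $y$ vary, each $z\in Z$ is simultaneously a fibre of $\phi(y,\cdot)$ for \emph{every} $y$. Thus the partition of $X$ into the sets $z\in Z$ coincides with the fibre partition of $\phi(y,\cdot)$ for each $y$, is therefore independent of $y$, is closed, and is $G$-invariant by equivariance of $\phi$; so $\rho:X\to Z$, sending $x$ to the unique $z$ containing it, is a factor map realizing $\Zb$ as a factor of $\Xb$ with $\phi(y,x)=J^{-1}(y,\rho(x))$. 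The only genuinely delicate step is the openness of the distal extension $\phi$, which powers the continuity of $\psi$; I expect that to be the main obstacle (and, for a general acting group $G$, verifying the openness of distal extensions in the needed generality), after which Theorem \ref{QF} and the product structure make everything else formal.
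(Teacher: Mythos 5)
Your proof is correct, and while it follows the same overall skeleton as the paper (fibre map $q\mapsto\bar q$ into $2^X$, a quasi-factor $Z$, Theorem \ref{QF}, disjointness, splitting), it differs genuinely at the key technical step. The paper never proves that $q\mapsto\bar q$ is continuous everywhere: it uses only upper semicontinuity, restricts to a residual set $Q_0$ of continuity points, forms $Z=\overline{\{\bar q:q\in Q_0\}}$ and $W=\overline{\{(q,\bar q):q\in Q_0\}}$, gets an almost one-to-one extension $\pi_Q:W\to Q$, and then uses distality of the quasi-factor $Z$ (via Theorem \ref{QF}) to collapse the proximal fibres of $\pi_Q$, so that $Q\cong W$; disjointness and point-separation finish the splitting. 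You instead observe that $\phi$ is a distal extension (your projection argument for proximal pairs in a fibre is correct) and invoke the classical theorem that a distal extension of minimal flows is open --- this is indeed a known result, valid for arbitrary acting groups (distal extensions are RIC and RIC extensions are open; it is in Bronstein's book and in Ellis--Glasner--Shapiro), so the step you flag as the main obstacle is safe. This buys you global continuity of $\psi$ at once, eliminating the $Q_0$ and $W$ detour and the almost-one-to-one collapse; note that on your route the distality of $Z$ is never actually used --- you need Theorem \ref{QF} only to realize $Z$ as a factor of $(E(X,G),G)$ so that $\Yb\perp\Zb$. The cost is reliance on the nontrivial external openness theorem, which the paper's more elementary residual-continuity argument avoids. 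Finally, your last paragraph is a genuine improvement in completeness: the theorem asserts that $\Zb$ is a factor of $\Xb$, but the paper's written proof only exhibits $Z$ as a quasi-factor of $\Xb$ and a factor of $(E(X,G),G)$. Your observation that, since $J$ is onto, the fibre partition of $\phi(y,\cdot)$ equals the family $Z$ for \emph{every} $y$, so that $Z$ is a $G$-invariant closed partition of $X$ into pairwise disjoint closed sets and $\rho:X\to Z$ (with the routine compactness check of continuity) is a factor map, supplies exactly the missing step and does so correctly.
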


\begin{proof}
Given $q \in Q$ let
$$
\bar{q} : = \{x \in X : \phi(\theta(q), x) = q\},
$$
so that the fiber in $Y \times X$ over the point $q \in Q$ has the form
$$
\phi^{-1}(q) =\{\theta(q)\} \times  \bar{q}.
$$
The map $q \mapsto \bar{q}$ is upper-semi-continuous; i.e. for a converging sequence $Q \ni q_i \to q$ in $Q$
we have $\limsup \bar{q}_i \subset \bar{q}$.
In fact, if $\bar{q}_i \ni x_i \to x$ then the equation
$$
q \leftarrow q_i =\phi(\theta(q_i), x_i)  \to q(\theta(q), x),
$$
shows that $x \in   \bar{q}$.
It now follows that there is a dense $G_\delta$ invariant set $Q_0 \subset Q$ where the map
$q \mapsto \bar{q}$ is continuous.

We define subsets $Z \subset 2^X$ and $W \subset Q \times Z$ as follows:
$$
Z = \ol {\{\bar{q} : q \in Q_0\}}, \quad
W = \ol {\{(q, \bar{q}) : q \in Q_0\}}.
$$

Standard arguments (see \cite{Gl-75}) now show that
\begin{itemize}
\item
$Z$ and $W$ are minimal systems (thus $Z$ is a quasi-factor of $X$).
\item
The projection map $\pi_Q : W \to Q, \ (q, z) \mapsto q$ is an almost one-to-one
homomorphism from $W$ onto $Q$.
\end{itemize}
Next apply Theorem \ref{QF} to deduce that $Z$ is a distal system and moreover is a factor
of $E(X,G)$.

Suppose $q =\pi_Q(q,z_1)= \pi_Q(q,z_2)$, then the points $z_1, z_2$ are both distal
and proximal, hence $z_1 = z_2$. Thus the map $\pi_Q$ is in fact an isomorphism:
$Q \cong W$.

Finally, as by assumption $Y$ is disjoint fron $E(X,G)$, it is also disjoint from $Z$.
Because both $Y$ and $Z$ are factors of $W$ we deduce that $W \to Y \times Z$.
But, clearly the factor maps $\pi_Q$ and $\pi_Y : W \to Y$ separate the points on $W$, whence
$W \cong Q \cong Y \times Z$ as claimed.
\end{proof}

\br

\section{Topological dynamics; some counter examples}\label{sec-cex}

A {\em cascade} is topological system where the acting group is the integers, generated by a single homeomorphism.
I.e. a pair $\Xb =(X,T)$ where $X$ is a compact metric space and $T : X \to X$ a self-homeomorphism.
%
%
%
%
Two minimal systems $\Xb$ and $\Yb$ are disjoint if the product system
$(X \times Y, T \times S)$ is minimal.
The system $\Xb$ is {\em uniquely ergodic} if there is on $X$ a unique $T$-invariant probability measure.
It has {\em uniformly positive entropy} if each non-diagonal pair $(x_1, x_2) \in X \times X \setminus \Del$ is
an entropy pair.
Every minimal system with zero topological entropy is topologically
disjoint from every minimal system with uniform positive entropy (for more details see e.g. \cite{Gl-03}).

Suppose that $(X,T)$ and $(Y,T)$ are two disjoint minimal dynamical systems.
Is it always true that every intermediate factor $Q$ of the form $Y \times X \xrightarrow[]{}Q\xrightarrow[]{} Y$ is of the form
$Y \times Z$, where $Z$ is a factor of $X$?
The answer is negative, as can be seen e.g. in the following:

\begin{exa}
There exist two, uniquely
ergodic systems, of uniformly positive entropy $\Xb_i,\ i=1,2$,
neither one is a factor of the other,
and a zero entropy system $\Yb$
such that $\Yb \times \Xb_1 \cong \Yb \times \Xb_2$.
\end{exa}

\begin{proof}
We again consider Rudolph's example \cite{Ru-76} $(X, \mathcal{B}, \mu, T_1)$, where
using his notations, $X = \Om \times \{0,1\}$.
Then, with $T' = \Id_\Om \times F$, $F$ denoting the flip on $\{0,1\}$,
we have $T_2 = T' T_1$ (so that $T'^2 = \Id$ and $T' T_1 = T_1 T'$).
Let $G$ be the group of measure preserving transformations of $(X,\Bcal,\mu)$
generated by $T_1$ and $T'$. We then regard $(X, \Bcal,\mu, G)$ as a $\Z \times \Z_2$-system.

Next apply (an extended version of) the Jewett-Krieger theorem to this
$G$-system (see \cite{We-85}) to obtain
a topological minimal, uniquely ergodic model which we denote as $(X,\mu, G)$.
On $X$ we now have the homeomorphisms $T_1$ and $T_2 = T' T_1$, and following the exact same argument
as in the measure theoretical example above, we obtain the required topological example.
Note that the $T_i$, not being measure theoretically factors of each other, also, a fortiori,
have this property as topological systems.
It only remains to observe that by \cite{GW-94} the homeomorphisms $T_1$ and $T_2$ have uniformly positive entropy.
\end{proof}

Another example, with $G = F_2$, the free group on two generators, is as follows.

\begin{exa}
Let $G =F_2 = \langle a, b, a^{-1}, b^{-1} \rangle$. Let
$Y = \hat{F}_2$ be the profinite completion of $F_2$ and let $X = \partial F_2$ be its Gromov boundary (see \cite{DM}).
Recall that in this case the phase space of the flow $X =\partial G$ is the Cantor set
formed by all the infinite reduced words on the symbols $a,b, a^{-1}, b^{-1}$.
Since $Y$ is an isometric flow and $X$ is stronly proximal, we have that $Y$ is disjoint from $X$;
i.e. the product flow $Y \times X$ is minimal.
Let $R_a \subset X \times X$ be the set
$$
R_a =  \{g (a^{\infty},a^{-\infty}), g(a^{-\infty}, a^{\infty}): g \in G\} \cup \Delta_X.
$$
It is easily seen that $R_a$ is a closed invariant equivalence
relation on $X$, corresponding to a factor map $X \to X_a = X/R_a$.
We similarly define $R_b$ and the corresponding factor map
$X \to X_b = X / R_b$.

Next choose points $y_0, y_1 \in Y$ such that $G y_0
\not= G y_1$ and then define $R$ to be the following relation on the $Y \times X$.
\begin{align*}
R = & \{((g y_0,x) , (g y_0,x')) :  g \in G, (x,x') \in R_a\} \cup \\
& \{((g y_1,x) , (g y_1,x')) : g \in G, (x,x') \in R_b\}  \cup\\
& \Delta_{Y \times X}.
\end{align*}
Again it is clear that $R$ is an ICER on $Y \times X$. Let
$Q = Y \times X /R$ be the corresponding factor.
Now check that $Q$ is not of the form $Y \times Z$
for any factor $X \to Z$.
\end{exa}

\begin{rem}
A similar example can be given with $\Z$ as the acting group.
\end{rem}

\br

\end{document}